\title{\Large More bounds for the Grundy number of graphs\thanks{Research supported by NSFC (No. 11161046) and by Xinjiang Talent Youth Project (No. 2013721012)}}
\author{ {Zixing Tang, Baoyindureng Wu \footnote{Corresponding author.
Email: wubaoyin@hotmail.com (B. Wu) }, Lin Hu}\\
\small  College of Mathematics and System Sciences, Xinjiang
University \\ \small  Urumqi, Xinjiang 830046, P.R.China \\
{Manoucheher Zaker \footnote{
Email: mzaker@iasbs.ac.ir (M. Zaker) }}\\
\small  Department of Mathematics, Institute for Advanced Studies in
Basic Sciences,
\\ \small 45137-66731 Zanjan, Iran \\}
\date{}
\newtheorem{theorem}{Theorem}[section]
\newtheorem{lemma}[theorem]{Lemma}
\newtheorem{corollary}[theorem]{Corollary}
\begin{document}
\maketitle {\small \noindent{\bfseries Abstract} A coloring of a
graph $G=(V,E)$ is a partition $\{V_1, V_2, \ldots, V_k\}$ of $V$
into independent sets or color classes. A vertex $v\in V_i$ is a
Grundy vertex if it is adjacent to at least one vertex in each color
class $V_j$ for every $j<i$. A coloring is a Grundy coloring if
every vertex is a Grundy vertex, and the Grundy number $\Gamma(G)$
of a graph $G$ is the maximum number of colors in a Grundy coloring.
We provide two new upper bounds on Grundy number of a graph and a
stronger version of the well-known Nordhaus-Gaddum theorem. In
addition, we give a new characterization for a $\{P_{4}, C_4\}$-free
graph by supporting a conjecture of Zaker, which says that
$\Gamma(G)\geq \delta(G)+1$ for any $C_4$-free
graph $G$. \\
{\bfseries Keywords}: Grundy number;  Chromatic number; Clique
number; Coloring number; Randi\'{c} index

\section {\large Introduction}
All graphs considered in this paper are finite and simple. Let
$G=(V,E)$ be a finite simple graph with vertex set $V$ and edge set
$E$, $|V|$ and $|E|$ are its {\it order} and {\it size},
respectively. As usual, $\delta(G)$ and $\Delta(G)$ denote the
minimum degree and the maximum degree of $G$, respectively. A set
$S\subseteq V$ is called a {\it clique} of $G$ if any two vertices
of $S$ are adjacent in $G$. Moreover, $S$ is {\it maximal} if there
exists no clique properly contain it. The clique number of $G$,
denoted by $\omega(G)$, is the cardinality of a maximum clique of
$G$. Conversely, $S$ is called an {\it independent} set of $G$ if no
two vertices of $S$ are adjacent in $G$. The independence number of
$G$, denoted by $\alpha(G)$, is the cardinality of a maximum
independent set of $G$. The {\em induced subgraph} of
$G$ induced by $S$, denoted by $G[S]$, is subgraph of $G$ whose
vertex set is $S$ and whose edge set consists of all edges of $G$
which have both ends in $S$.

Let $C$ be a set of $k$ colors. A {\it $k$-coloring} of $G$ is a
mapping $c: V\rightarrow C$, such that $c(u)\neq c(v)$ for any
adjacent vertices $u$ and $v$ in $G$.  The {\it chromatic number} of
$G$, denoted by $\chi(G)$, is the minimum integer $k$ for which $G$
has a $k$-coloring. Alternately, a $k$-coloring may be viewed as a
partition $\{V_1, \ldots, V_k\}$ of $V$ into independent sets, where
$V_i$ is the set of vertices assigned color $i$. The sets $V_i$ are
called the color classes of the coloring.

The {\it coloring number} $col(G)$ of a graph $G$ is the least
integer $k$ such that $G$ has a vertex ordering in which each vertex
is preceded by fewer than $k$ of its neighbors. The $degeneracy$ of
$G$, denoted by $deg(G)$, is defined as $deg(G)=max\{\delta(H):
H\subseteq G\}$. It is well known (see Page 8 in \cite{Jen}) that
for any graph $G$, \begin{equation} col(G)=deg(G)+1. \end{equation}

It is clear that for a graph $G$,

\begin{equation}\omega(G)\leq \chi(G)\leq col(G)\leq \Delta(G)+1. \end{equation}

Let $c$ be a $k$-coloring of $G$ with the color classes $\{V_1,
\ldots, V_k\}$. A vertex $v\in V_i$ is called a {\it Grundy vertex}
if it has a neighbor in each $V_j$ with $j<i$. Moreover, $c$ is
called a {\it Grundy $k$-coloring} of $G$ if each vertex $v$ of $G$
is a Grundy vertex. The {\it Grundy number} $\Gamma(G)$ is the
largest integer $k$, for which there exists a Grundy $k$-coloring
for $G$. It is clear that for any graph $G$,
\begin{equation} \chi(G)\leq \Gamma(G)\leq \Delta(G)+1.
\end{equation}

A stronger upper bound for Grundy number in terms of vertex degree
was obtained in \cite{Zak08} as follows. For any graph $G$ and $u\in
V(G)$ we denote $\{v\in V(G) : uv \in E(G), d(v) \leq d(u)\}$ by
$N_{\leq}(u)$, where $d(v)$ is the degree of $v$. We define
$\Delta_2(G) = \max_{u\in V(G)} \max_{v\in N_{\leq}(u)} d(v)$. It
was proved in \cite{Zak08} that $\Gamma(G)\leq \Delta_2(G)+1$. Since
$\Delta_2(G)\leq \Delta(G)$, we have
\begin{equation} \Gamma(G)\leq \Delta_2(G)+1\leq \Delta(G)+1.
\end{equation} The study of Grundy number dates back to 1930s when
Grundy \cite{Gru} used it to study kernels of directed graphs. The
Grundy number was first named and studied by Christen and Selkow
\cite{Chr} in 1979. Zaker \cite{Zak05} proved that determining the
Grundy number of the complement of a bipartite graph is NP-hard.

A {\em complete $k$-coloring}  $c$ of a graph $G$ is a
$k$-coloring of the graph such that for each pair of different
colors there are adjacent vertices with these colors. The {\it
achromaic number} of $G$, denoted by $\psi(G)$, is the maximum
number $k$ for which the graph has a complete $k$-coloring. It is
trivial to see that for any graph $G$,
\begin{equation}\Gamma(G)\leq \psi(G). \end{equation}

Note that $col(G)$ and $\Gamma(G)$ (or $\psi(G)$) is not comparable
for a general graph $G$. For instacne, $col (C_{4})=3$,
$\Gamma(C_{4})=2=\psi(C_4)$, while $col(P_{4})=2$,
$\Gamma(P_{4})=3=\psi(P_4)$. Grundy number was also studied under
the name of first-fit chromatic number, see \cite{Kie} for instance.


\section {\large New bounds on Grundy number}

In this section, we give two upper bounds on the Grundy number of a
graph in terms of its Randi\'{c} index, and the order and clique
number, respectively.

\subsection {Randi\'{c} index}

The Randi\'{c} index $R(G)$ of a (molecular) graph $G$ was
introduced by Randi\'{c} \cite{R} in 1975 as the sum of $\frac 1
{\sqrt{d(u)d(v)}}$ over all edges $uv$ of $G$, where $d(u)$ denotes
the degree of a vertex $u$ in $G$, i.e, $R(G)=\sum\limits_{uv\in
E(G)}1/{\sqrt{d(u)d(v)}}$. This index is quite useful in
mathematical chemistry and has been extensively studied, see
\cite{LG}. For some recent results on Randi\'{c} index, we refer to
\cite{DP, LS, LLBL, LLPDLS}.

\begin{theorem}\label{thm1}(Bollob\'{a}s and Erd\H{o}s \cite{Bol})
For a graph $G$ of size $m$,
$$ R(G)\geq\frac{\sqrt{8m+1}+1}{4},$$
with equality if and only if $G$ is consists of a complete graph and
some isolated vertices.
\end{theorem}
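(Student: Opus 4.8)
The plan is to turn the bound into a one--variable optimization. Since deleting isolated vertices changes neither $R(G)$ nor $m$, I would first assume that $G$ has no isolated vertices and set $n=|V(G)|$. Expanding squares gives the identity
\[
\sum_{uv\in E}\left(\frac{1}{\sqrt{d(u)}}+\frac{1}{\sqrt{d(v)}}\right)^{2}
=\sum_{v\in V}d(v)\cdot\frac{1}{d(v)}+2R(G)=n+2R(G),
\]
so $R(G)=\tfrac12\sum_{uv\in E}\left(d(u)^{-1/2}+d(v)^{-1/2}\right)^{2}-\tfrac n2$. Applying the Cauchy--Schwarz inequality (quadratic mean at least arithmetic mean) over the $m$ terms, and using $\sum_{uv\in E}\left(d(u)^{-1/2}+d(v)^{-1/2}\right)=\sum_{v\in V}\sqrt{d(v)}$, one gets
\[
R(G)\ \ge\ \frac{1}{2m}\left(\sum_{v\in V}\sqrt{d(v)}\right)^{2}-\frac n2 .
\]

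Next I would estimate $\sum_v\sqrt{d(v)}$ from below by concavity of $t\mapsto\sqrt t$. Every degree satisfies $1\le d(v)\le n-1$, and on $[1,n-1]$ the curve $\sqrt t$ lies above the chord joining $(1,1)$ and $(n-1,\sqrt{n-1})$, which gives $\sqrt{d(v)}\ge 1+\dfrac{d(v)-1}{\sqrt{n-1}+1}$. Summing over $v$ and using $\sum_v d(v)=2m$ yields $\sum_v\sqrt{d(v)}\ge n+\dfrac{2m-n}{\sqrt{n-1}+1}$, hence
\[
R(G)\ \ge\ \frac{1}{2m}\left(n+\frac{2m-n}{\sqrt{n-1}+1}\right)^{2}-\frac n2\ =:\ h(n,m).
\]
Here $h$ makes sense for every integer $n$ in the feasible range $\tfrac{1+\sqrt{8m+1}}{2}\le n\le 2m$, the lower bound coming from $m\le\binom{n}{2}$ and the upper one from the absence of isolated vertices.

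It now suffices to prove $h(n,m)\ge\dfrac{\sqrt{8m+1}+1}{4}$ for all such $n$. Setting $s=\sqrt{n-1}$ and $T=\tfrac{1+\sqrt{8m+1}}{2}$ (so that $T^{2}-T=2m$ and feasibility reads $s\ge\sqrt{T-1}$), a short computation gives $n+\dfrac{2m-n}{\sqrt{n-1}+1}=\dfrac{s^{3}+s+T^{2}-T}{s+1}$, and the inequality $h(n,m)\ge T/2$ rewrites as the polynomial inequality
\[
2\left(s^{3}+s+T^{2}-T\right)^{2}\ \ge\ (T^{2}-T)\,(s^{2}+T+1)\,(s+1)^{2}\qquad (s\ge\sqrt{T-1}).
\]
I expect this last estimate to be the main work: one checks that the two sides coincide at $s=\sqrt{T-1}$ (there $s^{3}+s+T^{2}-T=Ts(s+1)$) and that the difference of the two sides stays strictly positive for larger $s$, so that equality in $h(n,m)\ge T/2$ holds exactly when $n=T$. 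Tracing this back, equality in the theorem forces $n=T$, hence $m=\binom{n}{2}$; but the only graph on $n$ vertices with $\binom{n}{2}$ edges is $K_n$, so the extremal graphs are precisely a complete graph together with some isolated vertices, as claimed.
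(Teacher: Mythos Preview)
The paper does not give a proof of this theorem; it is quoted from Bollob\'as and Erd\H{o}s as a known result and then used as a black box in the proof of Theorem~2.2. So there is no argument in the paper to compare your proposal against.

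Regarding your outline on its own merits: the identity
\[
\sum_{uv\in E}\Bigl(d(u)^{-1/2}+d(v)^{-1/2}\Bigr)^{2}=n+2R(G)
\]
and the Cauchy--Schwarz step giving $R(G)\ge\frac{1}{2m}\bigl(\sum_v\sqrt{d(v)}\bigr)^{2}-\frac n2$ are both correct, and the chord bound $\sqrt{d(v)}\ge 1+\frac{d(v)-1}{\sqrt{n-1}+1}$ is legitimate since $1\le d(v)\le n-1$. The genuine gap is the final polynomial inequality
\[
2\bigl(s^{3}+s+T^{2}-T\bigr)^{2}\ \ge\ (T^{2}-T)(s^{2}+T+1)(s+1)^{2}\qquad(s\ge\sqrt{T-1}),
\]
which you do not actually prove: you verify equality at $s=\sqrt{T-1}$ and then assert that the difference ``stays strictly positive for larger $s$.'' That inequality is precisely where the comparison with $\frac{\sqrt{8m+1}+1}{4}$ is encoded, so the proof as written is incomplete at its decisive step. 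If you pursue this route you will need an explicit argument (factoring out the known root, or a monotonicity/derivative computation) rather than an expectation.
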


\begin{theorem} For a connected graph $G$ of order $n\geq 2$, $\psi(G)\leq 2R(G)$,
with equality if and only if $G\cong K_n$.
\end{theorem}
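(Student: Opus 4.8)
The plan is to combine a simple edge-counting fact about complete colorings with the Bollob\'as--Erd\H{o}s lower bound on the Randi\'c index (Theorem~\ref{thm1}). First I would set $k=\psi(G)$ and fix a complete $k$-coloring $\{V_1,\dots,V_k\}$ of $G$. Since the coloring is complete, for every pair $\{i,j\}$ of distinct colors there is at least one edge joining a vertex of $V_i$ to a vertex of $V_j$; distinct color pairs give distinct edges, because an edge $uv$ is associated to exactly one pair $\{c(u),c(v)\}$. Hence the size $m$ of $G$ satisfies $m\geq\binom{k}{2}=\frac{k(k-1)}{2}$.

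Next I would feed this into Theorem~\ref{thm1}. From $8m+1\geq 4k(k-1)+1=(2k-1)^2$ we get $\sqrt{8m+1}\geq 2k-1$, and therefore
$$R(G)\geq \frac{\sqrt{8m+1}+1}{4}\geq \frac{(2k-1)+1}{4}=\frac{k}{2}.$$
Multiplying by $2$ yields $2R(G)\geq k=\psi(G)$, which is the desired inequality.

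For the equality case I would trace back which inequalities must be tight. Equality $2R(G)=\psi(G)$ forces equality in Theorem~\ref{thm1}, so $G$ consists of a complete graph together with some isolated vertices; since $G$ is connected and $n\geq 2$, there are no isolated vertices, hence $G\cong K_n$. Conversely, for $G\cong K_n$ every edge joins two vertices of degree $n-1$, so $R(K_n)=\binom{n}{2}\cdot\frac{1}{n-1}=\frac{n}{2}$, while assigning a distinct color to each vertex shows $\psi(K_n)=n$; thus $2R(K_n)=n=\psi(K_n)$, and equality indeed holds exactly for $K_n$.

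I do not expect a serious obstacle here: the only point requiring care is bookkeeping in the equality analysis, namely using the connectedness and $n\geq 2$ hypotheses to eliminate the isolated-vertex alternative in the Bollob\'as--Erd\H{o}s equality condition, and recording $m=\binom{k}{2}$ so that the chain of inequalities collapses consistently. Everything else is a short computation.
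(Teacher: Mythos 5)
Your argument is exactly the paper's: bound $m\geq\binom{k}{2}$ from the completeness of a $\psi(G)$-coloring, apply the Bollob\'as--Erd\H{o}s inequality to get $R(G)\geq k/2$, and trace equality back through Theorem~\ref{thm1} using connectedness to exclude isolated vertices. The proposal is correct and takes essentially the same approach as the paper.
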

\begin{proof}
Let $\psi(G)=k$. Then $ m=e(G)\geq \frac{k(k-1)}{2}$. By Theorem
2.1,
$$ R(G)\geq \frac{\sqrt{8m+1}+1}{4}\\
 \geq\frac{\sqrt{4k(k-1)+1}+1}{4}\\
 =\frac{\sqrt{(2k-1)^{2}}+1}{4}\\
 =\frac{2k-1+1}{4}\\
 =\frac{k}{2}.
 $$
This shows that $\psi(G)\leq 2R(G).$ If $k=2R(G)$, then
$m= \frac{k(k-1)}{2}$ and the equality is satisfied. Since $G$ is
connected, by Theorem 2.1, $G=K_k=K_n$.

If $G=K_n$, then $\psi(G)=n=2R(G)$.

\end{proof}

\begin{corollary} For a connected graph $G$ of order $n$, $\Gamma(G)\leq 2R(G)$,
with equality if and only if $G\cong K_n$.
\end{corollary}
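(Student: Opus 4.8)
The plan is to obtain both parts of the statement directly from Theorem 2.3 together with inequality (5), which asserts $\Gamma(G)\le\psi(G)$ for every graph $G$. Chaining these gives $\Gamma(G)\le\psi(G)\le 2R(G)$ immediately, so the inequality itself requires nothing new. (As in Theorem 2.3 I would read this with $n\ge 2$, since a connected graph of order $1$ has $\Gamma=1>0=2R$.)

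For the equality characterization, one direction is a short computation: if $G\cong K_n$, then every proper coloring of $K_n$ uses $n$ colors and is automatically a Grundy coloring, so $\Gamma(K_n)=n$; and since each of the $\binom{n}{2}$ edges contributes $1/\sqrt{(n-1)(n-1)}$, we get $R(K_n)=\binom{n}{2}\cdot\frac1{n-1}=\frac n2$, hence $2R(K_n)=n=\Gamma(K_n)$.

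For the converse, suppose $\Gamma(G)=2R(G)$. Then $2R(G)=\Gamma(G)\le\psi(G)\le 2R(G)$ forces $\psi(G)=2R(G)$, and since $G$ is connected the equality case of Theorem 2.3 yields $G\cong K_n$, completing the argument.

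I do not expect a genuine obstacle here, as the corollary is a consequence of results already established; the only point deserving a moment's care is that the hypothesis $\Gamma(G)=2R(G)$ must first be propagated up to $\psi(G)=2R(G)$ before Theorem 2.3 can be applied, rather than attempting to characterize the equality case for $\Gamma$ from scratch.
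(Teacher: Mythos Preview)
Your argument is correct and is exactly the approach the paper intends: the corollary is stated without proof immediately after the theorem $\psi(G)\le 2R(G)$, so the chain $\Gamma(G)\le\psi(G)\le 2R(G)$ together with the equality case of that theorem is precisely what is meant. One small slip: the result you cite as ``Theorem 2.3'' is actually Theorem~2.2 in the paper's numbering (Corollary~2.3 is the very statement you are proving).
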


\begin{theorem} (Wu et al. \cite{Wu}) If $G$ is a connected graph of order $n\geq 2$, then $col(G)\leq
2R(G)$, with equality if and only if $G\cong K_n$
\end{theorem}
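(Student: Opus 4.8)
The plan is to follow the proof of Theorem 2.2 essentially verbatim, the only new ingredient being a lower bound on $e(G)$ in terms of $col(G)$ (playing the role that $e(G)\ge\binom{\psi(G)}{2}$ plays for the achromatic number). Once such a bound is in hand, the Bollob\'as--Erd\H{o}s inequality (Theorem 2.1) does all the work, exactly as in the proof of Theorem 2.2.

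First I would set $k=col(G)$ and use the identity $col(G)=deg(G)+1$ together with the definition $deg(G)=\max\{\delta(H):H\subseteq G\}$: the maximum is attained, so $G$ has a subgraph $H$ with $\delta(H)=k-1$. Every vertex of $H$ then has degree at least $k-1$ in $H$, so $H$ has at least $k$ vertices and hence $e(G)\ge e(H)\ge\tfrac12 k(k-1)=\binom{k}{2}$. Writing $m=e(G)$ and invoking Theorem 2.1, the computation is now identical to the one in the proof of Theorem 2.2:
\[
R(G)\ \ge\ \frac{\sqrt{8m+1}+1}{4}\ \ge\ \frac{\sqrt{4k(k-1)+1}+1}{4}\ =\ \frac{\sqrt{(2k-1)^2}+1}{4}\ =\ \frac{k}{2},
\]
which gives $col(G)\le 2R(G)$.

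For the equality case, suppose $col(G)=2R(G)$. Then the chain above is a chain of equalities; in particular $R(G)=\tfrac14(\sqrt{8m+1}+1)$, so by the equality clause of Theorem 2.1 the graph $G$ is the disjoint union of a complete graph and some isolated vertices. Since $G$ is connected with $n\ge 2$ it has no isolated vertex, so $G\cong K_n$. Conversely, $col(K_n)=(n-1)+1=n$ and $R(K_n)=\binom{n}{2}\cdot\frac{1}{n-1}=\frac n2$, so $col(K_n)=2R(K_n)$, and the characterization follows.

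I do not expect a genuine obstacle: the argument is a direct transcription of Theorem 2.2 with the achromatic number replaced by the coloring number. The two places that deserve a moment's attention are the existence of the subgraph $H$ realizing $\delta(H)=deg(G)$ (immediate, but it is what converts the algebraic identity $col(G)=deg(G)+1$ into the combinatorial edge count), and, in the equality discussion, making sure one traces equality back through Theorem 2.1 rather than stopping at the intermediate bound $e(G)\ge\binom{k}{2}$; the latter by itself does not force $G\cong K_n$, whereas the equality clause of Theorem 2.1 does.
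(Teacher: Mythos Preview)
Your argument is correct. Note, however, that the paper does not supply its own proof of this statement: it is quoted from \cite{Wu} and used as a black box, so there is no ``paper's proof'' to compare against. What you have written is a valid self-contained proof that parallels the paper's proof of Theorem~2.2, with the inequality $e(G)\ge\binom{\psi(G)}{2}$ replaced by $e(G)\ge\binom{col(G)}{2}$, the latter obtained from the existence of a subgraph $H$ with $\delta(H)=deg(G)=col(G)-1$ and hence $|V(H)|\ge col(G)$.
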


So, combining the above two results, we have

\begin{corollary} For a connected graph $G$ of order $n\geq 2$, $\max\{\psi(G), col(G)\}\leq 2R(G)$,
with equality if and only if $G\cong K_n$.
\end{corollary}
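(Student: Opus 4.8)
The plan is to deduce this statement directly from the two results already in hand: the Theorem of Wu et al., which gives $col(G)\le 2R(G)$ with equality exactly when $G\cong K_n$, and the preceding Theorem of this section, which gives $\psi(G)\le 2R(G)$ with equality exactly when $G\cong K_n$. Since $\max\{\psi(G),col(G)\}$ is one of the two quantities $\psi(G)$, $col(G)$, and each of them is bounded above by $2R(G)$, the inequality $\max\{\psi(G),col(G)\}\le 2R(G)$ is immediate. So the only thing requiring any argument at all is the equality case.

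For the equality characterization I would argue as follows. First, if $G\cong K_n$ (recall $n\ge 2$, so $G$ is connected), then $\psi(K_n)=col(K_n)=n$ and $R(K_n)=\binom{n}{2}\cdot\frac{1}{\sqrt{(n-1)(n-1)}}=\frac{n}{2}$, hence $\max\{\psi(G),col(G)\}=n=2R(G)$. Conversely, suppose $\max\{\psi(G),col(G)\}=2R(G)$. Then either $\psi(G)=2R(G)$ or $col(G)=2R(G)$ (the max is attained by one of the two terms, and that term cannot exceed $2R(G)$, so if the max equals $2R(G)$ the attaining term equals $2R(G)$). In the first case the equality clause of the $\psi$-bound forces $G\cong K_n$; in the second case the equality clause of the Wu et al. bound forces $G\cong K_n$. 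Either way $G\cong K_n$, which completes the proof.

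I do not expect any genuine obstacle here: the statement is a packaging of two theorems whose equality conditions happen to be identical, which is precisely what makes the combined ``$\max$'' version clean. The one point to state carefully — and the only place a careless reader might slip — is that from $\max\{a,b\}=c$ together with $a\le c$ and $b\le c$ one really can conclude $a=c$ or $b=c$; this is elementary but worth spelling out so the ``only if'' direction is airtight. Everything else is a one-line citation of the earlier results.
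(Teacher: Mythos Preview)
Your proposal is correct and matches the paper's approach exactly: the paper simply states that the corollary follows by combining the preceding theorem on $\psi(G)$ with the Wu et al.\ result on $col(G)$, without writing out any further argument. Your treatment of the equality case is more explicit than the paper's, but the underlying reasoning is identical.
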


\subsection{\large Clique number}

Zaker \cite{Zak06} showed that for a graph $G$, $\Gamma(G)=2$ if and
only if $G$ is a complete bipartite (see also the page 351 in \cite{
Cha}). Zaker and Soltani \cite{Zak15} showed that for any integer
$k\geq 2$, the smallest triangle-free graph of Grundy number $k$ has
$2k-2$ vertices. Let $B_k$ be the graph obtained from $K_{k-1,k-1}$
by deleting a matching of cardinality $k-2$, see $B_k$ for an
illustration in Fig. 1. The authors showed that  $\Gamma(B_k)=k$.

\begin{center}
\scalebox{0.3}{\includegraphics{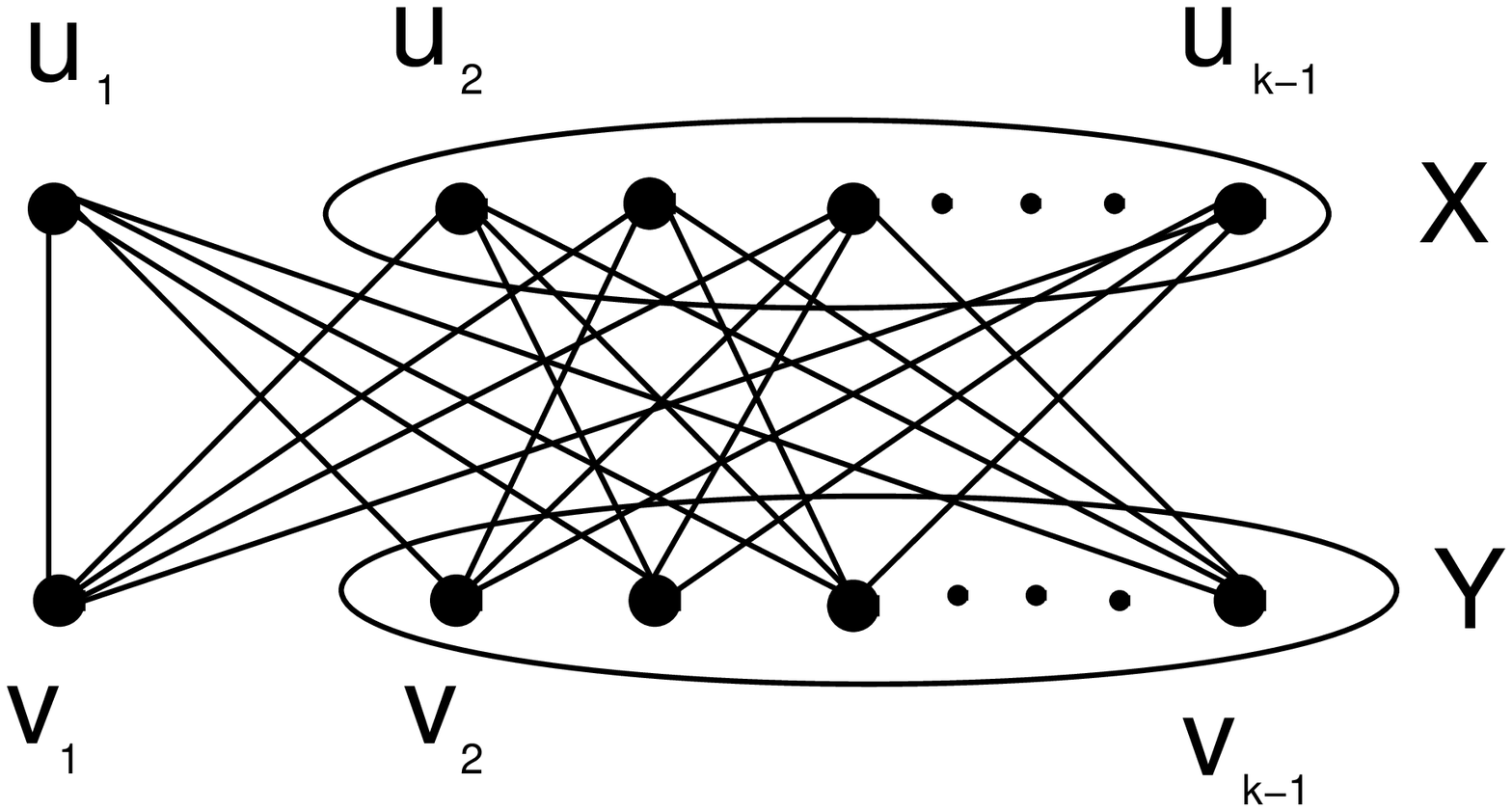}}\\
\vspace{0.5cm} Fig. 1. The graph $B_k$ for $k\geq 2$
\end{center}


One may formulate the above result of Zaker and Soltani:
$\Gamma(G)\leq \frac{n+2} {2}$ for any triangle-free graph $G$ of
order $n$.

\begin{theorem}

\noindent (i) For a graph $G$ of order $n\geq 1$, $\Gamma(G)\leq\frac{n+\omega(G)}{2}.$\\

\noindent (ii) Let $k$ and $n$ be any two integers such that $k+n$ is even and $k\leq n$. Then there exists a graph $G_{k,n}$ on $n$ vertices such that $\omega(G_{k,n})=k$ and $\Gamma(G_{k,n})=\frac{n+k}{2}$.\\

\noindent (iii) In particular, if $G$ is a connected triangle-free graph of order $n\geq 2$, then
$\Gamma(G)=\frac{n+2}{2}$ if and only if $G\cong B_{\frac{n+2}{2}}$.
\end{theorem}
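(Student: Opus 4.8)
I would prove the three parts in turn, using part~(i) as the engine for the extremal analyses in parts~(ii) and~(iii). For part~(i), the plan is to fix a Grundy coloring of $G$ realizing $k=\Gamma(G)$, with colour classes $V_1,\dots,V_k$, and set $S=\{i:|V_i|=1\}$, writing $V_i=\{w_i\}$ for $i\in S$. The key step is to observe that $\{w_i:i\in S\}$ is a clique: if $i<j$ both lie in $S$, then $w_j$, being a Grundy vertex, must have a neighbour in $V_i=\{w_i\}$, so $w_iw_j\in E$; hence $\omega(G)\ge|S|$. Since every colour class is nonempty and every class outside $S$ has at least two vertices, $n=\sum_i|V_i|\ge|S|+2(k-|S|)=2k-|S|\ge 2k-\omega(G)$, which rearranges to $\Gamma(G)\le\frac{n+\omega(G)}{2}$. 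I expect no real obstacle here once the clique observation is made.

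For part~(ii), assume $k\ge 2$ (if $k\le 1$ the requirement $\omega=k$ forces $n=k$ and the claim is trivial) and write $n-k=2t$ with $t\ge 0$. I would take $G_{k,n}$ to be the graph built from $B_{t+2}$ — which is triangle-free, has $2t+2$ vertices, and satisfies $\Gamma(B_{t+2})=t+2$ — by adjoining $k-2$ pairwise adjacent vertices, each joined to every vertex of $B_{t+2}$. Then $|V(G_{k,n})|=(2t+2)+(k-2)=n$; a maximum clique of $B_{t+2}$ (which has size $2$, or is all of $K_2$ when $t=0$) together with the $k-2$ new vertices forms a clique of size $k$, and one checks $G_{k,n}$ has no larger clique, so $\omega(G_{k,n})=k$. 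Extending a Grundy $(t+2)$-coloring of $B_{t+2}$ by giving the new vertices the colours $t+3,\dots,t+k$ in any order yields a Grundy $(t+k)$-coloring, whence $\Gamma(G_{k,n})\ge t+k=\frac{n+k}{2}$; the reverse inequality is part~(i). Only the ``no larger clique'' verification is routine bookkeeping.

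For part~(iii), the implication $G\cong B_{(n+2)/2}\Rightarrow\Gamma(G)=\frac{n+2}{2}$ is immediate, since $B_m$ is connected, triangle-free, has $2m-2$ vertices, and $\Gamma(B_m)=m$. For the converse, suppose $G$ is connected and triangle-free with $n=2k-2$ and $\Gamma(G)=k$. Then $G$ has an edge, so $\omega(G)=2$, and equality throughout the inequality chain of part~(i) forces $|S|=2$ together with $|V_i|=2$ for every $i\notin S$. Thus there are exactly two singleton classes $V_{i_1}=\{w_1\}$ and $V_{i_2}=\{w_2\}$ with $i_1<i_2$ and $w_1w_2\in E$, while every other class has the form $V_i=\{a_i,b_i\}$ with $a_ib_i\notin E$. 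Using the edge $w_1w_2$ and triangle-freeness I would next show $\{i_1,i_2\}=\{k-1,k\}$: otherwise $V_k$ or $V_{k-1}$ would contain two vertices, each adjacent to both $w_1$ and $w_2$, producing a triangle. Write $a:=w_1\in V_{k-1}$ and $b:=w_2\in V_k$.

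The remaining task, which I expect to be the main obstacle, is to pin down every adjacency and recognise $B_k$. Since $a\sim b$, the vertices $a$ and $b$ have no common neighbour, so for each $i\le k-2$ exactly one of $a_i,b_i$ is adjacent to $a$ and the other — and only the other, again by triangle-freeness — to $b$; naming them so that $a\sim a_i$ and $b\sim b_i$ gives $N(a)=\{b,a_1,\dots,a_{k-2}\}$ and $N(b)=\{a,b_1,\dots,b_{k-2}\}$. Then $\{a_1,\dots,a_{k-2}\}$ and $\{b_1,\dots,b_{k-2}\}$ are independent (a triangle through $a$, resp.\ $b$, otherwise), so for $i<j\le k-2$ the Grundy vertices $a_j$ and $b_j$ are forced to take their neighbour in $V_i$ to be $b_i$ and $a_i$ respectively; hence $a_ib_j\in E$ for all distinct $i,j\le k-2$, and there are no further edges inside $V_1\cup\dots\cup V_{k-2}$. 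With $X=\{b,a_1,\dots,a_{k-2}\}$ and $Y=\{a,b_1,\dots,b_{k-2}\}$, this exhibits $G$ as $K_{k-1,k-1}$ on parts $X,Y$ with the matching $\{a_ib_i:1\le i\le k-2\}$ deleted, i.e.\ $G\cong B_k$. The care needed throughout is to apply the Grundy condition (to force edges) and triangle-freeness (to forbid them) in the correct order, and to nail down at the outset that the two singleton classes are precisely $V_{k-1}$ and $V_k$.
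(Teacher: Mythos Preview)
Your proposal is correct. The chief difference from the paper is in part~(i): the paper argues by induction on $\Gamma(G)$, removing the first class $V_1$ and splitting into the cases $|V_1|\ge 2$ and $|V_1|=1$, whereas your direct counting argument (the singleton classes index a clique, so $|S|\le\omega(G)$ and $n\ge 2k-|S|$) is shorter and has the added benefit that the equality conditions needed to launch part~(iii) fall out immediately from the inequality chain. For part~(ii) the paper builds a different extremal graph: it partitions a $K_k$ into near-equal halves $A,B$, takes $H_t=K_{t,t}$ minus a perfect matching with Grundy classes $C_i=\{a_i,b_i\}$, and joins $A$ to every $a_i$ and $B$ to every $b_i$; the upper bound $\Gamma\le t+k$ is then read off from $\Delta=t+k-1$ rather than from part~(i). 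Your construction (adjoining $k-2$ universal vertices to $B_{t+2}$) is equally valid and has the virtue of tying the sharpness directly back to the bound just proved. Part~(iii) follows the paper's argument closely. One small point to tighten when you write it out: in ruling out $i_1<k-1$, the two vertices of $V_{k-1}$ are both forced adjacent to $w_1$ by their own Grundy condition, but neither is forced adjacent to $w_2\in V_k$ that way; instead it is $w_2$ that must have a neighbour in $V_{k-1}$, and \emph{that} vertex together with $w_1$ and $w_2$ yields the triangle. The paper handles this asymmetry in the same manner.
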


\begin{proof} We prove part (i) of the assertion by induction on $\Gamma(G)$.
Let $k=\Gamma(G)$. If $k=1$, then $G=\overline{K_n}$. The result is
trivially true. Next we assume that $k\geq 2$.

Let $V_{1}, V_{2}, \ldots, V_{k}$ be the color classes
of a Grundy coloring of $G$. Set $H=G\setminus V_{1}$. Then
$\Gamma(H)=k-1$. By the induction hypothesis,
$\Gamma(H)\leq\frac{n-|V_{1}|+\omega(H)}{2}$. Hence,
 $$\Gamma(G)=\Gamma(H)+1\leq \frac{n-|V_{1}|+\omega(H)}{2}+1=\frac{n+\omega(H)+2-|V_{1}|}{2}.$$

We consider two cases. If $|V_{1}|\geq 2$, then
$$\Gamma(G)\leq \frac{n+\omega(H)+2-|V_{1}|}{2}\leq \frac{n+\omega(G)+2-2}{2}=\frac{n+\omega(G)}{2}.$$

Now assume that $|V_{1}|=1$. Since $V_1$ is a maximal independent
set of $G$, every vertex in $H$ is adjacent to the
vertex in $V_1$, and thus $\omega(H)=\omega(G)-1$. So,
$$\Gamma(G)=\frac{n+\omega(G)+1-|V_{1}|}{2}=\frac{n+\omega(G)}{2}.$$

To prove part (ii), we construct $G_{k,n}$ as follows. First
consider a complete graph on $k$ vertices and partition its vertex
set into two subsets $A$ and $B$ such that $||A|-|B||\leq 1$.
Let $t$ be an integer such that $t=\frac {n-k} 2 $.
Let also $H_t$ be the graph obtained from $K_{t,t}$ by deleting a
perfect matching of size $t$. It is easily observed that
$\Gamma(H_t)=t$, where any Grundy coloring with $t$ colors consists
of $t$ color classes $C_1, \ldots, C_t$ such that for each $i$,
$|C_i|=2$. Let $C_i=\{a_i, b_i\}$.
Now for each $i\in \{1, \ldots, t\}$, join all vertices of $A$ to $a_i$ and join all vertices of $B$ to $b_i$. We denote the resulting graph by $G_{k,n}$. We note by our construction that $\omega(G_{k,n})=k$ and $\Gamma(G_{k,n})\geq k+t$. Also, clearly $\Delta(G_{k,n})=t+k-1$. Then $\Gamma(G_{k,n}) = k+t = (n+k)/2$. This completes the proof of part (ii).\\

Now we show part (iii) of the statement. It is straightforward
to check that $\Gamma(B_k)=k$. Next, we assume that $G$ is connected
triangle-free graph of order $n\geq 2$ with
$\Gamma(G)=\frac{n+2}{2}$. Let $k=\frac{n+2}{2}$. To show $G\cong
B_k$, let $V_1, \ldots, V_k$ be a Grundy coloring of $G$.

\vspace{3mm}\noindent{\bf Claim 1.} (a) $|V_k|=1$; (b)
$|V_{k-1}|=1$; (c) $|V_i|=2$ for each $i\leq k-2$.

Since $G$ is triangle-free, there are at most two color classes with
cardinality 1 among $V_1, \ldots, V_k$. Since
$$2k-2=|V_1|+\cdots+|V_k|\geq 1+1+2+\cdots+2=2(k-1),$$
there are exactly two color classes with cardinality 1, and all
others have cardinality two. Let $u$ and $v$ the two vertices lying
the color classes of cardinality 1. Observe that $u$
and $v$ are adjacent.

We show (a) by contradiction. Suppose that $|V_k|=2$, and let
$V_k=\{u_k, v_k\}$. Since $u$ is adjacent to $v$ and
both $u_k$ and $v_k$ are adjacent to $u$ and $v$, we have a
contradiction with the fact that $G$ is triangle-free. This shows
$|V_k|=1$.

To complete the proof of the claim, it suffices to show (b). Toward
a contradiction, suppose $|V_{k-1}|=2$, and let $V_{k-1}=\{u_{k-1},
v_{k-1}\}$. By (a), let $|V_i|=1$ for an integer $i<k-1$. Without
loss of generality, let $V_i=\{u\}$ and $V_k=\{v\}$. Since
$u_{k-1}u\in E(G)$, $v_{k-1}u\in E(G)$, and at least one of
$u_{k-1}$ and $v_{k-1}$ is adjacent to $v$, it follows that there
must be a triangle in $G$, a contradiction.

So, the proof of the claim is completed.

\vspace{3mm} Note that $uv\in E(G)$. Let $V_i=\{u_i, v_i\}$ for each
$i\in \{1, \ldots, k-2\}$. Since $G$ is triangle-free, exactly one
of $u_i$ and $v_i$ is adjacent to $u$ and the other one is adjacent
to $v$. Without loss of generality, let $u_iv\in E(G)$ and $v_iu\in
E(G)$ for each $i$. Since $G$ is triangle-free, both $\{u_1, \ldots,
u_{k-1}, u\}$ and $\{v_1, \ldots, v_{k-1}, v\}$ are independent sets
of $G$, implying that $G$ is a bipartite graph.

To complete the proof for $G\cong B_k$, it remains to show that
$u_iv_j\in E(G)$ for any $i$ and $j$ with $i\neq j$. Without loss of
generality, let $i<j$. Since $v_iv_j\notin E(G)$, $u_iv_j\in E(G)$.

So, the proof is completed.
\end{proof}

Since for any graph $G$ of order $n$,
$\chi(\overline{G})\omega(G)=\chi(\overline{G})\alpha(\overline{G})\geq
n$, by Theorem 2.6, the following result is immediate.

\begin{corollary} (Zaker \cite{Zak07}) For any graph $G$ of order $n$, $\Gamma(G)\leq
\frac{\chi(\overline{G})+1} 2 \omega(G)$.
\end{corollary}

\begin{corollary} (Zaker \cite{Zak05}) Let $G$ be the complement of a bipartite graph. Then $\Gamma(G)\leq\frac{3\omega(G)}{2}$.
\end{corollary}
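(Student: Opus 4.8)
The plan is to read this off directly from the results already established in this section, since the hypothesis ``$G$ is the complement of a bipartite graph'' is precisely a constraint on $\chi(\overline{G})$. Concretely, write $\overline{G}=H$ with $H$ bipartite; then $\chi(\overline{G})=\chi(H)\le 2$, with $\chi(\overline{G})=1$ occurring exactly in the degenerate case $G=K_n$. Substituting $\chi(\overline{G})\le 2$ into the previous corollary, $\Gamma(G)\le\frac{\chi(\overline{G})+1}{2}\,\omega(G)$, immediately yields $\Gamma(G)\le\frac{3}{2}\,\omega(G)$, which is the assertion.

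If one prefers to bypass the previous corollary, the same conclusion follows directly from Theorem 2.6(i). Let $H$ have bipartition $(X,Y)$ with $|X|+|Y|=n$; since non-adjacency in $H$ corresponds to adjacency in $G$, both $X$ and $Y$ induce cliques of $G$, so $\omega(G)\ge\max\{|X|,|Y|\}\ge n/2$, i.e.\ $n\le 2\omega(G)$. Feeding this into $\Gamma(G)\le\frac{n+\omega(G)}{2}$ gives $\Gamma(G)\le\frac{2\omega(G)+\omega(G)}{2}=\frac{3\omega(G)}{2}$.

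I expect no real obstacle here: the entire argument is a one-line substitution into an inequality proved above, and the only auxiliary fact needed is the elementary observation that one side of a bipartite graph contains at least half of its vertices, which under complementation becomes the clique lower bound $\omega(G)\ge n/2$. The degenerate case $G=K_n$ (where $H$ is edgeless) need only be noted for bookkeeping: there $\Gamma(G)=\omega(G)=n\le\frac{3\omega(G)}{2}$, so the bound still holds, with slack to spare.
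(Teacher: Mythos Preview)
Your proposal is correct, and your second approach is essentially identical to the paper's own proof: the paper takes the bipartition $(X,Y)$ of $\overline{G}$, notes that $X$ and $Y$ are cliques in $G$ so that $|X|,|Y|\le \omega(G)$ and hence $n=|X|+|Y|\le 2\omega(G)$, and then substitutes into Theorem~2.6(i). Your first route via Corollary~2.7 is an equally valid one-line substitution and differs only cosmetically.
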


\begin{proof} Let $n$ be the order of $G$ and $(X, Y)$ be the bipartition of $V(\overline{G})$. Since $X$
and $Y$ are cliques of $G$, $\max\{|X|, |Y|\}\leq \omega(G)$. By
Theorem 2.6,
$$\Gamma(G)\leq\frac {n+\omega(G)} 2= \frac {|X|+|Y|+\omega(G)} 2 \leq \frac{3\omega(G)}{2}.$$

\end{proof}

The following result is immediate from by the inequality (2) and
Theorem 2.6.
\begin{corollary} For any graph $G$ of order $n$,
$\Gamma(G)\leq\frac{n+ \chi(G)}{2}\leq\frac{n+ col(G)}{2}$.
\end{corollary}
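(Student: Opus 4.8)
The plan is to obtain both inequalities directly from Theorem~2.6(i) together with the elementary chain~(2). First I would invoke Theorem~2.6(i), which asserts that $\Gamma(G)\leq\frac{n+\omega(G)}{2}$ for every graph $G$ of order $n$. Then, recalling from inequality~(2) that $\omega(G)\leq\chi(G)\leq col(G)$, and using that the map $x\mapsto\frac{n+x}{2}$ is increasing, I would chain these together to get
$$\Gamma(G)\leq\frac{n+\omega(G)}{2}\leq\frac{n+\chi(G)}{2}\leq\frac{n+col(G)}{2},$$
which is precisely the claimed statement.

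There is essentially no obstacle here: all the substance lies in Theorem~2.6(i), which we are permitted to assume, and in the textbook bound $\omega(G)\leq\chi(G)\leq col(G)$ recorded in~(2). The only point meriting a moment's attention is the direction of the argument — since the goal is an \emph{upper} bound on $\Gamma(G)$, one must replace $\omega(G)$ by the possibly larger quantities $\chi(G)$ and then $col(G)$, each substitution only weakening the bound, rather than the other way around. It is also worth noting in passing that continuing the chain via $col(G)\leq\Delta(G)+1$ from~(2) yields $\Gamma(G)\leq\frac{n+\Delta(G)+1}{2}$, so Corollary~2.10 interpolates between Theorem~2.6(i) and the classical bound in~(3)–(4); I might include this remark after the displayed inequality, but it is not needed for the proof itself.
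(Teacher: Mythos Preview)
Your proof is correct and matches the paper's approach exactly: the paper simply states that the corollary is immediate from inequality~(2) and Theorem~2.6, which is precisely the chain $\Gamma(G)\leq\frac{n+\omega(G)}{2}\leq\frac{n+\chi(G)}{2}\leq\frac{n+col(G)}{2}$ you spell out. Your additional remark about interpolating toward the bound in~(3)--(4) is a nice observation but, as you note, unnecessary for the result.
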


Chang and Hsu \cite{Chang} proved that $\Gamma(G)\leq log_{\frac
{col(G)} {col(G)-1}}n +2$ for a nonempty graph $G$ of order $n$.
Note that this bound is not comparable to that given in the above
corollary.

\section{\large Nordhaus-Gaddum type inequality}

In 1956, Nordhaus and Gaddum \cite{Nord} proved that for any graph
$G$ of order $n$,
$$\chi(G)+\chi(\overline{G})\leq n+1.$$
Since then, relations of a similar type have been proposed for many
other graph invariants, in several hundred papers, see the survey
paper of Aouchiche and Hansen \cite{Aou}. In 1982 Cockayne and
Thomason \cite{Coc} proved that
$$\Gamma(G)+\Gamma(\overline{G})\leq \lfloor\frac {5n+2} 4\rfloor$$
for a graph $G$ of order $n\geq 10$, and this is sharp. In 2008
F\"{u}redi et al. \cite{Fur} rediscovered the above theorem. Harary
and Hedetniemi \cite{Har} established that
$\psi(G)+\chi(\overline{G})\leq n+1$ for any graph $G$ of order $n$
extending the Nordhaus-Gaddum theorem.

Next, we give a theorem, which is stronger than the Nordhaus-Gaddum
theorem, but is weaker than Harary-Hedetniemi's theorem. Our proof
is turned out to be much simpler than that of Harary-Hedetniemi's
theorem.

It is well known that $\chi(G-S)\geq \chi(G)-|S|$ for a set
$S\subseteq V(G)$ of a graph $G$. The following result assures that
a stronger assertion holds when $S$ is a maximal clique of a graph
$G$.
\begin{lemma} Let $G$ be a graph of order at least two which is not a complete graph.
For a maximal clique $S$ of $G$, $\chi(G-S)\geq\chi(G)-|S|+1$.
\end{lemma}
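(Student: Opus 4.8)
The plan is to argue by contradiction, exploiting the maximality of the clique $S$ to recover the single ``lost'' color. Suppose $\chi(G-S) \leq \chi(G) - |S|$, and fix an optimal coloring of $G-S$ using at most $\chi(G)-|S|$ colors, say with color classes $W_1, W_2, \ldots, W_t$ where $t \leq \chi(G)-|S|$. The idea is to extend this to a proper coloring of all of $G$ using at most $\chi(G)-1$ colors, contradicting the definition of $\chi(G)$. The vertices of $S$ form a clique of size $|S|$, so naively they would need $|S|$ fresh colors, giving a total of $t + |S| \leq \chi(G)$ colors --- not quite a contradiction. The key observation is that maximality of $S$ lets us reuse one old color on one vertex of $S$.

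First I would record the structural consequence of maximality: since $S$ is a \emph{maximal} clique and $G$ is not complete, there is at least one vertex $x \in S$ that is \emph{not} adjacent to some vertex $y \in V(G)\setminus S$ (indeed, if every vertex of $S$ were adjacent to every vertex outside $S$, then $S$ together with any outside vertex would still be a clique unless $V(G)\setminus S = \emptyset$, i.e. $G$ is complete). Actually I need something slightly finer: for the argument I want a vertex of $S$ that can be given an \emph{old} color. Consider the coloring of $G-S$ and the vertex $x \in S$; I want to show $x$ has a neighbor-free color among $W_1,\ldots,W_t$ after accounting for which outside vertices $x$ sees. The cleanest route: pick any maximal independent set $I$ of $G-S$ containing no neighbor of some chosen $x\in S$ --- but let me instead argue directly. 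By maximality of $S$, for \emph{every} vertex $w \in V(G)\setminus S$ there exists a vertex $s_w \in S$ with $s_w w \notin E(G)$ (otherwise $S \cup \{w\}$ is a larger clique).

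Now here is the extension: color $G-S$ with $W_1,\ldots,W_t$ as above. Take one color class, say $W_1$, and let $x \in S$. If $x$ has no neighbor in $W_1$, assign $x$ the color $1$; the remaining $|S|-1$ vertices of $S$ get $|S|-1$ brand-new colors, for a total of $t + |S| - 1 \leq \chi(G) - 1$, a contradiction. So I must ensure such a configuration exists: I need a vertex $x\in S$ and an existing color class avoiding $N(x)$. Since each $w \notin S$ misses some vertex of $S$, by a counting/pigeonhole step over the (at most $t$) color classes and the vertices of $S$, one shows some vertex of $S$ has an entire color class in its non-neighborhood --- or, if not, one derives $|S|$ is large enough that $G$ contains a clique forcing $\chi(G)$ up, contradicting optimality of the $G-S$ coloring. \textbf{The main obstacle} is precisely making this pigeonhole clean: it is conceivable that every vertex of $S$ has a neighbor in every class $W_i$, and then the naive recoloring fails. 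I expect the resolution is to choose the coloring of $G-S$ more carefully --- e.g. take a coloring of $G-S$ that is optimal \emph{and} chosen so that some class is ``small'' near $S$, or equivalently extend a maximum independent set argument: let $y$ be a vertex of $G-S$ nonadjacent to some $x\in S$ (exists by maximality), build a proper coloring of $(G-S)$ in which $y$'s class can be merged with $\{x\}$. So the real work is: given maximality of $S$, produce a proper $t$-coloring of $G-S$ with a color class $W_j$ such that $W_j \cup \{x\}$ is independent in $G$ for some $x\in S$; then $\{W_1,\ldots,\widehat{W_j},\ldots,W_t, W_j\cup\{x\}\}$ together with singleton classes for $S\setminus\{x\}$ is a proper coloring of $G$ with $t+|S|-1 \le \chi(G)-1$ colors, the desired contradiction. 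I would handle the edge cases ($|S|=1$, or $G-S$ empty) separately and trivially.
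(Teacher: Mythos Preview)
Your proof has a genuine gap, and you yourself flagged it: the step ``find $x\in S$ and a color class $W_j$ with $W_j\cup\{x\}$ independent'' cannot in general be carried out, even if you are free to choose the optimal coloring of $G-S$. Here is a small counterexample. Let $G-S$ be a $4$-cycle $u_1w_1u_2w_2$, so $\chi(G-S)=2$ and the only $2$-coloring has classes $\{u_1,u_2\}$ and $\{w_1,w_2\}$. Let $S=\{s_1,s_2\}$ with $s_1s_2\in E(G)$, and add the edges $s_1u_1,\,s_1w_1,\,s_2u_2,\,s_2w_2$. Then $S$ is a maximal clique (no outside vertex sees both $s_1$ and $s_2$), yet $s_1$ has a neighbor in each class and so does $s_2$. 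No amount of ``choosing the coloring carefully'' helps, since the $2$-coloring of $C_4$ is unique. So your proposed resolution --- forcing a whole class into the non-neighborhood of a single vertex of $S$ --- is not available.

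The correct idea, and the one the paper uses, is to \emph{distribute} the vertices of one color class among \emph{all} the vertices of $S$, rather than trying to merge the whole class with a single $x\in S$. Concretely: fix any optimal coloring $V_1,\ldots,V_k$ of $G-S$ with $k=\chi(G-S)$, and look at $G[S\cup V_k]$. Since $S$ is a maximal clique, every $w\in V_k$ has a non-neighbor $s_w\in S$; give each vertex of $S$ its own color and give each $w\in V_k$ the color of $s_w$. Because $V_k$ is independent, this is a proper $|S|$-coloring of $G[S\cup V_k]$. Together with $V_1,\ldots,V_{k-1}$ you get a proper $(k+|S|-1)$-coloring of $G$, hence $\chi(G)\le \chi(G-S)+|S|-1$. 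The contradiction framework is unnecessary; the paper just proves the inequality directly.
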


\begin{proof} Let $V_1, V_2, \ldots, V_k$ be the color classes of a $k$-coloring of
$G-S$, where $k=\chi(G-S)$. Since $S$ is a maximal clique of $G$,
for each vertex $v\in V(G)\setminus S$, there exists a vertex $v'$
which is not adjacent to $v$. Hence $G[S\cup V_k]$ is $s$-colorable,
where $s=|S|$. Let $U_1, \ldots, U_s$ be the color classes of an
$s$-coloring of $G[S\cup V_k]$. Thus, we can obtain a
$(k+s-1)$-coloring of $G$ with the color classes $V_1, V_2, \ldots,
V_{k-1}, U_1, \ldots, U_s$. So,
$$\chi(G)\leq k+s-1=\chi(G-S)+|S|-1.$$
\end{proof}

\begin{theorem} For a graph $G$ of order $n$, $\Gamma(G)+\chi(\overline{G})\leq
n+1$, and this is sharp.
\end{theorem}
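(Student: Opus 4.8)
The estimate cannot be obtained from Theorem 2.6 alone: combining $\Gamma(G)\le(n+\omega(G))/2$ with the trivial bound $\chi(\overline G)\le n-\omega(G)+1$ (colour a maximum independent set of $\overline G$, i.e.\ a maximum clique of $G$, with one colour) only yields $\Gamma(G)+\chi(\overline G)\le(3n-\omega(G)+2)/2$, which is at most $n+1$ precisely when $G$ is complete. The plan instead is to run an induction on the order $n$ that is driven by the lemma just proved, namely that $\chi(H-S)\ge\chi(H)-|S|+1$ whenever $S$ is a maximal clique of a non-complete graph $H$, applied with $H=\overline G$.

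First I would dispose of the easy cases, which also settle sharpness. For $n=1$ both sides are $2=n+1$. For $n\ge2$: if $G=K_n$ then $\Gamma(G)+\chi(\overline G)=n+1$, and if $G=\overline{K_n}$ then $\Gamma(G)+\chi(\overline G)=1+n=n+1$; so the bound is attained, and these cases are out of the way.

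For the inductive step, assume $n\ge2$ and that $G$ is neither complete nor edgeless, and fix a Grundy colouring of $G$ with $k=\Gamma(G)$ colour classes $V_1,\ldots,V_k$. I would record two facts about $V_1$. (a) Every vertex outside $V_1$ is a Grundy vertex, hence has a neighbour in $V_1$; thus $V_1$ is a maximal independent set of $G$, equivalently a maximal clique of $\overline G$, and moreover $|V_1|\le n-1$ because $G$ has an edge. (b) Deleting $V_1$ and shifting all colours down by one turns $V_2,\ldots,V_k$ into a Grundy $(k-1)$-colouring of $G-V_1$, so $\Gamma(G-V_1)\ge\Gamma(G)-1$.

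Now apply the lemma to $H=\overline G$ with the maximal clique $S=V_1$; this is legitimate since $\overline G$ has order $n\ge2$ and, as $G$ is not edgeless, is not complete. Because $\overline G-V_1=\overline{G-V_1}$, the conclusion reads $\chi(\overline G)\le\chi(\overline{G-V_1})+|V_1|-1$. Since $G-V_1$ has order $n-|V_1|<n$, the induction hypothesis gives $\Gamma(G-V_1)+\chi(\overline{G-V_1})\le n-|V_1|+1$. Adding the three inequalities,
\begin{align*}
\Gamma(G)+\chi(\overline G)&\le\bigl(\Gamma(G-V_1)+1\bigr)+\bigl(\chi(\overline{G-V_1})+|V_1|-1\bigr)\\
&=\Gamma(G-V_1)+\chi(\overline{G-V_1})+|V_1|\ \le\ n+1,
\end{align*}
the contributions of $|V_1|$ cancelling exactly. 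The only points requiring care are verifying that $V_1$ is a maximal independent set (so the lemma is applicable to $\overline G$) and isolating the edgeless case, where $\overline G=K_n$ is complete and the lemma fails but the inequality is immediate; once these are handled the proof is essentially the one-line telescoping above, which is why it is so much shorter than the achromatic-number argument of Harary and Hedetniemi.
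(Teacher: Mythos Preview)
Your argument is correct and follows essentially the same route as the paper's proof: remove the first colour class $V_1$ of a maximum Grundy colouring, apply Lemma~3.1 to $\overline{G}$ with the maximal clique $V_1$, and combine with the inductive bound on the smaller graph. The only cosmetic differences are that the paper inducts on $\Gamma(G)$ rather than on $n$ (it asserts $\Gamma(G-V_1)=\Gamma(G)-1$, which is in fact true since $V_1$ together with any Grundy $k$-colouring of $G-V_1$ would yield a Grundy $(k+1)$-colouring of $G$), and that the paper exhibits sharpness via the joins $K_k\vee\overline{K_{n-k}}$ rather than just $K_n$ and $\overline{K_n}$.
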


\begin{proof}  We prove by induction on $\Gamma(G)$. If
$\Gamma(G)=1$, then $G=\overline{K_n}$. The result is trivially
holds, because $\Gamma(G)=1$ and $\chi(\overline{G})=n$. The result
also clearly true when $G=K_n$.

Now assume that $G$ is not a complete graph and $\Gamma(G)\geq 2$.
Let $V_{1},V_{2},\ldots,V_{k}$ be a Grundy coloring of $G$. Set
$H=G\setminus V_{1}$. Then $\Gamma(H)=\Gamma(G)-1$. By the induction
hypothesis,
$$\Gamma(H)+\chi(\overline{H})\leq n-|V_{1}|+1.$$

Since $V_{1}$ is a maximal independent set of $G$, it is a maximal
clique of $\overline{G}$. By Lemma 3.1, we have
$$\chi(\overline{H})\geq \chi(\overline{G})-|V_1|+1. $$

Therefore
\begin{eqnarray*}
\\ \Gamma(G)+\chi(\overline{G})
&\leq&\ (\Gamma(H)+1)+(
\chi(\overline{H})+|V_1|-1)\\
&\leq&\ (\Gamma(H)+\chi(\overline{H}))+|V_1| \\
&\leq&\ n-|V_1|+1+|V_1| \\
&=&\ n+1.
\end{eqnarray*}

To see the sharpness of the bound, let us consider $G_{n,k}$, which
is the graph obtained from the joining each vertex of $K_k$ to all
vertices of $\overline{K_{n-k}}$, where $1\leq k\leq n-1$. It can be
checked that $\Gamma(G_{n,k})=k+1$ and
$\chi(\overline{G_{n,k}})=n-k$. So
$\Gamma(G_{n,k})+\chi(\overline{G_{n,k}})=n+1$.
The proof is completed.
\end{proof}

Finck \cite{Finck} characterized all graphs $G$ of order $n$ such
that $\chi(G)+\chi(\overline{G})=n+1$. It is an interesting problem
to characterize all graphs $G$ attaining the bound in Theorem 3.2.
Since $\alpha(G)=\omega(\overline{G})\leq \chi(\overline{G})$ for
any graph $G$, the following corollary is a direct consequence of
Theorem 3.2.

\begin{corollary}( Effantin and Kheddouci \cite{Eff}) For a graph $G$ of order $n$,
$$\Gamma(G)+\alpha(G) \leq n+1.$$
\end{corollary}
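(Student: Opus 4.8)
The plan is to derive this as an immediate consequence of Theorem 3.2, using only the elementary chain $\alpha(G) = \omega(\overline{G}) \leq \chi(\overline{G})$, which is in fact already flagged in the sentence preceding the statement. First I would recall that a set of vertices is independent in $G$ precisely when it is a clique in $\overline{G}$, so a maximum independent set of $G$ is a maximum clique of $\overline{G}$; this gives $\alpha(G) = \omega(\overline{G})$. Next, since the vertices of any clique of a graph $H$ must receive pairwise distinct colors in every proper coloring, we have $\omega(H) \leq \chi(H)$; applying this with $H = \overline{G}$ yields $\omega(\overline{G}) \leq \chi(\overline{G})$, and hence $\alpha(G) \leq \chi(\overline{G})$.

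The second and final step is to add $\Gamma(G)$ to both sides of $\alpha(G) \leq \chi(\overline{G})$ and invoke Theorem 3.2, obtaining $\Gamma(G) + \alpha(G) \leq \Gamma(G) + \chi(\overline{G}) \leq n+1$. This completes the proof.

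There is essentially no genuine obstacle here; the only thing meriting a moment's attention is that Theorem 3.2 is stated for an arbitrary graph of order $n$ with no side conditions, so it applies directly no matter the connectivity or completeness of $G$ or the size of $n$. I would also note, as a closing remark, that the bound is sharp via the same family $G_{n,k}$ used for Theorem 3.2: there $\alpha(G_{n,k}) = n-k = \chi(\overline{G_{n,k}})$, so the equality $\Gamma(G_{n,k}) + \chi(\overline{G_{n,k}}) = n+1$ carries over to $\Gamma(G_{n,k}) + \alpha(G_{n,k}) = n+1$.
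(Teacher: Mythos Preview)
Your proof is correct and follows exactly the route the paper takes: the sentence preceding the corollary already states that $\alpha(G)=\omega(\overline{G})\leq\chi(\overline{G})$ makes it a direct consequence of Theorem~3.2. Your added sharpness remark via $G_{n,k}$ is a nice bonus not spelled out in the paper.
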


\section{\large Perfectness}

Let $\mathcal{H}$ be a family of graphs. A graph $G$ is called {\it
$\mathcal{H}$-free} if no induced subgraph of $G$ is isomorphic to
any $H\in \mathcal{H}$. In particular, we simply write $H$-free
instead of $\{H\}$-free if $\mathcal{H}=\{H\}$. A graph $G$ is
called {\it perfect}, if $\chi(H)=\omega(H)$ for each induced
subgraph $H$ of $G$. It is well known that every $P_4$-free graph is
perfect.

A {\it chordal} graph is a simple graph which contains no induced
cycle of length four or more. Berge \cite{Ber} showed that every
chordal graph is perfect. A {\it simplicial} vertex of a graph is
vertex whose neighbors induce a clique.

\begin{theorem} (Dirac \cite{Dir}) Every chordal graph has a
simplicial vertex.
\end{theorem}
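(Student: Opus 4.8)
The plan is to prove the slightly stronger statement that every chordal graph which is not complete has at least two non-adjacent simplicial vertices, and then read off the theorem: if $G$ is complete every vertex is simplicial, and otherwise the stronger statement applies. I would argue by induction on $|V(G)|$. The base case is a non-complete graph on at most two vertices, i.e.\ $\overline{K_2}$, whose two vertices are simplicial and non-adjacent.

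For the inductive step, pick two non-adjacent vertices $a$ and $b$ and let $S$ be a minimal set of vertices whose removal separates $a$ from $b$. Let $A$ and $B$ be the vertex sets of the components of $G-S$ containing $a$ and $b$, respectively. The key structural fact I would establish first is that $S$ is a clique of $G$. By minimality of $S$, every vertex of $S$ has a neighbor in $A$ and a neighbor in $B$; given $x,y\in S$, connectedness of $G[A]$ and of $G[B]$ lets me choose a shortest $x$--$y$ path with interior in $A$ and another with interior in $B$, and concatenating them yields a cycle of length at least four. Since $G$ has no edges between $A$ and $B$ and the paths were chosen shortest (hence with no chords among their interior vertices), the only possible chord of this cycle is the edge $xy$; chordality of $G$ then forces $xy\in E(G)$. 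This clique property of $S$ is the crux of the argument, and the step I expect to require the most care.

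With $S$ a clique in hand, I would apply the induction hypothesis to $G_A := G[A\cup S]$, which is chordal and has fewer vertices than $G$ because it omits the nonempty set $B$. If $G_A$ is complete, any vertex of $A$ is simplicial in $G$, since such a vertex has all its neighbors inside $A\cup S$. If $G_A$ is not complete, the induction hypothesis gives two non-adjacent simplicial vertices of $G_A$; since $S$ is a clique at most one of them lies in $S$, so at least one lies in $A$, and that vertex is simplicial in $G$ as well, its neighborhood being contained in $A\cup S$ and therefore unchanged from $G_A$ to $G$. Either way I obtain a vertex $v_A\in A$ simplicial in $G$, and symmetrically a vertex $v_B\in B$ simplicial in $G$. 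Finally $v_A$ and $v_B$ lie in different components of $G-S$, hence are non-adjacent, which completes the induction and the proof.
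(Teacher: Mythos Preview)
The paper does not prove this theorem; it is simply quoted from Dirac \cite{Dir} and used as a black box to derive Corollary~4.2. There is therefore no proof in the paper to compare against.

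That said, your argument is correct and is essentially Dirac's classical proof: strengthen the claim to ``every non-complete chordal graph has two non-adjacent simplicial vertices'', show that every minimal $a$--$b$ separator $S$ in a chordal graph is a clique (via the cycle argument you describe), and then induct on $G[A\cup S]$ and $G[B\cup S]$, using the clique property of $S$ to guarantee that at least one of the two simplicial vertices produced on each side lies outside $S$. The details you outline --- that each $s\in S$ has neighbors in both $A$ and $B$ by minimality, that the concatenated shortest paths form a cycle whose only possible chord is $xy$, and that a simplicial vertex of $G[A\cup S]$ lying in $A$ remains simplicial in $G$ because $N_G(v)\subseteq A\cup S$ --- are all sound. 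The base case and the handling of the complete-$G_A$ subcase are also fine.
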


\begin{corollary} If $G$ is a chordal graph, then
$\delta(G)\leq \omega(G)-1$.
\end{corollary}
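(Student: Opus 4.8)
The plan is to derive the bound $\delta(G)\leq\omega(G)-1$ directly from Dirac's theorem (Theorem 4.1 in the excerpt). First I would recall that a chordal graph, by definition, contains no induced cycle of length four or more; this is exactly the hypothesis needed to invoke Dirac's theorem, which guarantees the existence of a simplicial vertex.

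The key step is as follows. Let $G$ be a chordal graph and let $v$ be a simplicial vertex of $G$, whose existence is assured by Theorem 4.1. By definition of a simplicial vertex, the neighborhood $N(v)$ induces a clique in $G$. Then the set $\{v\}\cup N(v)$ is itself a clique: every pair of vertices in $N(v)$ is adjacent (since $N(v)$ induces a clique), and $v$ is adjacent to each vertex of $N(v)$ by definition of the neighborhood. Hence $G$ has a clique of size $|N(v)|+1 = d(v)+1$, which gives $\omega(G)\geq d(v)+1$.

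Finally I would combine this with the trivial inequality $\delta(G)\leq d(v)$, since the minimum degree is a lower bound on the degree of any vertex. Putting the two together yields $\delta(G)\leq d(v)\leq\omega(G)-1$, which is the desired conclusion.

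There is essentially no obstacle here; the corollary is an immediate consequence of Dirac's theorem, and the only content is the observation that a simplicial vertex together with its neighborhood forms a clique. One should perhaps note the trivial edge case where $G$ has an isolated vertex (so $\delta(G)=0$ and $\omega(G)\geq 1$, assuming $G$ is nonempty), but this is already covered by the general argument and need not be singled out.
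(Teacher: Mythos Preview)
Your proof is correct and follows essentially the same approach as the paper's: both invoke Dirac's theorem to obtain a simplicial vertex $v$, observe that $N(v)$ (together with $v$) forms a clique so that $d(v)\leq\omega(G)-1$, and conclude via $\delta(G)\leq d(v)$. The paper's version is simply more terse, omitting the explicit mention of $\{v\}\cup N(v)$ and the inequality $\delta(G)\leq d(v)$.
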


\begin{proof}
Let $v$ be a simplicial vertex. By Theorem 4.1, $N(v)$ is a clique,
and thus $d(v)\leq \omega(G)-1$.
\end{proof}

Markossian et al. \cite{Mar} remarked that for a chordal graph $G$,
$col(H)=\omega(H)$ for any induced subgraph $H$ of $G$. Indeed, its
converse is also true. For convenience, we give the proof here.

\begin{theorem}  A graph $G$ is chordal if and only if
$col(H)=\omega(H)$ for any induced subgraph $H$ of $G$.
\end{theorem}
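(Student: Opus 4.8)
The plan is to prove the two directions separately. For the easy direction, suppose $G$ is chordal. Then every induced subgraph $H$ of $G$ is itself chordal, so it suffices to show $col(H)=\omega(H)$ for chordal $H$. By inequality (2) we always have $\omega(H)\leq col(H)$, so I only need $col(H)\leq\omega(H)$, equivalently $deg(H)\leq\omega(H)-1$ by equation (1). But $deg(H)=\max\{\delta(H'):H'\subseteq H\}$, and every subgraph $H'\subseteq H$ is again chordal (being a subgraph of a chordal graph; note if $H'$ is a non-induced subgraph it is still chordal, since deleting edges cannot create an induced cycle of length $\geq 4$ — actually here one should take $H'$ induced when computing degeneracy, or argue directly). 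Applying Corollary 4.2 to each such $H'$ gives $\delta(H')\leq\omega(H')-1\leq\omega(H)-1$, hence $deg(H)\leq\omega(H)-1$ and $col(H)=deg(H)+1\leq\omega(H)$, as desired.

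For the converse, I would argue by contraposition: assume $G$ is not chordal and exhibit an induced subgraph $H$ with $col(H)>\omega(H)$. Since $G$ is not chordal it contains an induced cycle $C=C_\ell$ with $\ell\geq 4$. Take $H=C_\ell$. Then $\omega(C_\ell)=2$ for $\ell\geq 4$, while every vertex of $C_\ell$ has degree $2$, so $C_\ell$ is $2$-regular and hence $\delta(C_\ell)=2$, giving $deg(C_\ell)\geq 2$ and $col(C_\ell)=deg(C_\ell)+1\geq 3>2=\omega(C_\ell)$. (In fact $col(C_\ell)=3$.) Thus the hypothesis $col(H)=\omega(H)$ for all induced subgraphs fails, completing the contrapositive.

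The main thing to be careful about — the only real obstacle — is the subtlety in the forward direction concerning what "$H'\subseteq G$" means in the definition of degeneracy: the cleanest route is to observe that $deg(H)$ is attained on an \emph{induced} subgraph (removing edges only decreases minimum degree, so one may always pass to the induced subgraph on the same vertex set without decreasing $\delta$), and then every such induced subgraph of chordal $H$ is chordal, so Corollary 4.2 applies uniformly. Once that is pinned down the rest is just chaining the inequalities $\omega(H)\leq col(H)=deg(H)+1\leq(\omega(H)-1)+1=\omega(H)$.

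Put together, this shows $col(H)=\omega(H)$ for every induced subgraph $H$ iff $G$ has no induced $C_\ell$ with $\ell\geq 4$, i.e.\ iff $G$ is chordal.
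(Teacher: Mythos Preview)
Your proof is correct and follows essentially the same route as the paper: for the converse you both exhibit an induced $C_\ell$ ($\ell\geq 4$) with $col(C_\ell)=3>2=\omega(C_\ell)$, and for the forward direction you both reduce to Corollary~4.2 via the observation that degeneracy is attained on an \emph{induced} subgraph. One small slip worth cleaning up: your parenthetical claim that ``deleting edges cannot create an induced cycle of length $\geq 4$'' is false (e.g.\ removing the chord from a $4$-cycle-with-a-chord yields $C_4$), but you rightly abandon that line and supply the correct argument a few sentences later, so no harm is done.
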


\begin{proof} The sufficiency is immediate from the fact that any cycle $C_k$ with
$k\geq 4$, $col(C_k)=3\neq 2=\omega(C_k)$.

Since every induced subgraph of a chordal graph is still a chordal
graph, to prove the necessity of the theorem, it suffices to show
that $col(G)=\omega(G)$. Recall that $col(G)=deg(G)+1$ and
$deg(G)=\max\{\delta(H): H\subseteq G\}$. Observe that
$$\max\{\delta(H): H\subseteq G\}=\max\{\delta(H): H
\text{ is an induced subgraph of}\ G\}.$$
Since $\omega(G)-1\leq\max\{\delta(H): H\subseteq G\}$ and
$\delta(H)\leq \omega(H)-1$ for any induced subgraph $H$ of $G$,
$\max\{\delta(H): H \text{ is an induced subgraph of}\ G\}\leq
\omega(G)-1,$ we have $deg(G)=\max\{\delta(H): H\subseteq
G\}=\omega(G)-1$. Thus, $col(G)=\omega(G)$.
\end{proof}

Let $\alpha, \beta\in \{\omega, \chi, \Gamma, \psi\}$.
A graph $G$ is called {\em $\alpha \beta$-perfect} if for each
induced subgraph $H$ of $G$, $\alpha(H)=\beta(H)$. Among other
things, Christen and Selkow proved that

\begin{theorem} (Christen and Selkow \cite{Chr}) For any graph $G$, the following statements are
equivalent:

(1) $G$ is $\Gamma\omega$-perfect.

(2) $G$ is $\Gamma\chi$-perfect.

(3) $G$ is $P_4$-free.

\end{theorem}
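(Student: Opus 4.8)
The plan is to close a cycle of implications: prove $(1)\Rightarrow(3)$ and $(2)\Rightarrow(3)$ by an immediate contrapositive, and prove $(3)\Rightarrow(1)$ and $(3)\Rightarrow(2)$ simultaneously from one structural fact. For the easy direction, recall from the introduction that $\Gamma(P_4)=3$ while $\omega(P_4)=\chi(P_4)=2$. Hence if $G$ contains an induced $P_4$, that copy $H\cong P_4$ is an induced subgraph with $\Gamma(H)\neq\omega(H)$ and $\Gamma(H)\neq\chi(H)$, so $G$ is neither $\Gamma\omega$-perfect nor $\Gamma\chi$-perfect. This gives $(1)\Rightarrow(3)$ and $(2)\Rightarrow(3)$.

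For the remaining implications it suffices to prove the single statement: \emph{if $G$ is $P_4$-free, then $\Gamma(G)=\omega(G)$.} Indeed, every induced subgraph of a $P_4$-free graph is again $P_4$-free, so this yields $\Gamma(H)=\omega(H)$ for all induced subgraphs $H$, which is $(1)$; and combined with the always-valid chain $\omega(H)\le\chi(H)\le\Gamma(H)$ from inequality $(3)$ it forces $\chi(H)=\Gamma(H)$ as well, which is $(2)$. (Alternatively one could invoke perfectness of $P_4$-free graphs to get $\chi=\omega$ directly, but the displayed chain already suffices.)

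To prove $\Gamma(G)=\omega(G)$ for $P_4$-free $G$ I would induct on $|V(G)|$ using the structure theorem for $P_4$-free graphs (cographs): every $P_4$-free graph on at least two vertices is a disjoint union $G_1\cup G_2$ or a join $G_1+G_2$ of two smaller, necessarily $P_4$-free, graphs; the case $|V(G)|=1$ is trivial. In the disjoint-union case one shows $\Gamma(G_1\cup G_2)=\max\{\Gamma(G_1),\Gamma(G_2)\}$: the lower bound comes from overlaying the two Grundy colorings on a common set of color names, and the upper bound holds because the colors appearing on $G_i$ in any Grundy coloring form an initial segment $\{1,\dots,t_i\}$ (a vertex of color $c$ in $G_i$ has all its neighbors in $G_i$, so every smaller color already occurs in $G_i$) and the restriction to $G_i$ is then a Grundy coloring with $t_i$ colors; since also $\omega(G_1\cup G_2)=\max\{\omega(G_1),\omega(G_2)\}$, the induction hypothesis closes this case. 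In the join case one shows $\Gamma(G_1+G_2)=\Gamma(G_1)+\Gamma(G_2)$: for the lower bound, place a Grundy coloring of $G_1$ on colors $1,\dots,a$ and of $G_2$ on colors $a+1,\dots,a+b$, noting that each vertex of $G_2$ sees all of $G_1$ and hence all lower colors; for the upper bound, every color class of a Grundy coloring of $G_1+G_2$ is independent and so lies wholly in $G_1$ or wholly in $G_2$, and restricting and renumbering the classes on $G_i$ yields a Grundy coloring of $G_i$, so the total number of colors is at most $\Gamma(G_1)+\Gamma(G_2)$. Since $\omega(G_1+G_2)=\omega(G_1)+\omega(G_2)$, the induction hypothesis closes this case too.

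Every step is short; the crux I would watch is the join upper bound, where after deleting the classes assigned to $G_2$ and renumbering the surviving classes $1,2,\dots$ one must check that the Grundy property survives on $G_1$ — a vertex keeps a neighbor in each lower surviving class because its neighbor in the corresponding original $G_1$-class is itself a vertex of $G_1$. If one prefers to avoid the cograph structure theorem, an alternative route is to prove $\Gamma(G)\le\omega(G)$ directly, greedily extracting from a Grundy $k$-coloring $V_1,\dots,V_k$ a clique $x_k\in V_k,\dots,x_1\in V_1$; there the obstacle shifts to showing that the partial clique already selected has a common neighbor in the next color class, which is precisely where an induced $P_4$ would otherwise be forced to appear.
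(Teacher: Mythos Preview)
The paper does not actually prove this theorem; it is stated as a cited result of Christen and Selkow with no accompanying argument. So there is no proof in the paper to compare your proposal against.

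That said, your proposal is correct. The contrapositive directions $(1)\Rightarrow(3)$ and $(2)\Rightarrow(3)$ are immediate from the values $\Gamma(P_4)=3$, $\omega(P_4)=\chi(P_4)=2$ recorded in the introduction. For the main direction, the cograph decomposition argument is clean and the two identities you need, $\Gamma(G_1\cup G_2)=\max\{\Gamma(G_1),\Gamma(G_2)\}$ and $\Gamma(G_1+G_2)=\Gamma(G_1)+\Gamma(G_2)$, are both verified correctly; in particular your check of the join upper bound (that renumbering the $G_1$-classes preserves the Grundy property because the required lower-color neighbors already lie in $G_1$) is exactly the point that needs care, and your justification is sound. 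The reduction of $(3)\Rightarrow(2)$ to $(3)\Rightarrow(1)$ via the chain $\omega\le\chi\le\Gamma$ is also fine.

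One contextual remark: the original Christen--Selkow paper (1979) predates the widespread use of the cograph structure theorem, so their argument is more direct and closer in spirit to the alternative you sketch at the end---building a $k$-clique greedily from a Grundy $k$-coloring and showing that failure to extend the partial clique would produce an induced $P_4$. Your structural route is arguably tidier, at the cost of importing the decomposition theorem as a black box; the direct route is self-contained but requires a small case analysis at the extension step.
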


For a graph $G$, $m(G)$ denotes the number of of
maximal cliques of $G$. Clearly, $\alpha(G)\leq m(G)$.  A graph $G$
is called {\em trivially perfect} if for every induced subgraph $H$
of $G$, $\alpha(H)=m(H)$. A partially order set $(V, <)$ is an
arborescence order if for all $x\in V$, $\{y:\ y<x\}$ is a totally
ordered set.

\begin{theorem} (Wolk \cite{Wo}, Golumbic \cite{Gol}) Let $G$ be a
graph. The following conditions are equivalent:

(i) $G$ is the comparability graph of an arborescence order.

(ii) $G$ is $\{P_4, C_4\}$-free.

(iii) $G$ is trivially perfect.

\end{theorem}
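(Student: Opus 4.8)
The plan is to give the classical argument via the cyclic chain of implications $(iii)\Rightarrow(ii)\Rightarrow(i)\Rightarrow(iii)$. This suffices because each of the three conditions is inherited by induced subgraphs: for $(ii)$ and $(iii)$ this is immediate, and for $(i)$ it holds because the restriction of an arborescence order $(V,<)$ to any subset $S\subseteq V$ is again an arborescence order (principal down-sets only shrink) whose comparability graph is $G[S]$. Throughout, it helps to keep the concrete picture of $(i)$ in mind: setting $p(x)=\max\{y:y<x\}$ whenever $\{y:y<x\}\neq\emptyset$ realizes an arborescence order as a rooted forest in which $y<x$ exactly when $y$ is a proper ancestor of $x$, and its comparability graph joins two vertices precisely when one is an ancestor of the other.

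The implication $(iii)\Rightarrow(ii)$ is immediate: since $\alpha(P_4)=2<3=m(P_4)$ and $\alpha(C_4)=2<4=m(C_4)$, neither $P_4$ nor $C_4$ is trivially perfect, and as trivial perfectness is hereditary a trivially perfect graph has no induced $P_4$ or $C_4$. For orientation I would also record the direct proof of $(i)\Rightarrow(ii)$: if a vertex $z$ of the comparability graph of an arborescence order has two non-adjacent neighbours $x,y$, then necessarily $x>z$ and $y>z$, since if instead $x,y<z$ they would lie in the chain below $z$ and hence be comparable, while $x<z<y$ would force $x<y$; applying this at the two internal vertices $b,c$ of a putative induced $P_4$ on $a,b,c,d$ (in order) gives $b<c$ and $c<b$, and applying it at $a$ and at $b$ of a putative induced $C_4$ on $a,b,c,d,a$ gives $a<b$ and $b<a$ — both absurd.

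The main work is $(ii)\Rightarrow(i)$, and the step I expect to be the crux — although its proof, once one has thought to isolate it, is short — is the lemma that \emph{every connected $\{P_4,C_4\}$-free graph on at least two vertices has a universal vertex}. To prove it, take a vertex $v$ of maximum degree and suppose it is not universal; connectedness and absence of an induced $P_4$ then force a vertex $w$ with $d(v,w)=2$ (a shortest $v$-$w$ path of length at least $3$ would have its first four vertices inducing a $P_4$), and we fix a common neighbour $x$ of $v$ and $w$, so $v\sim x\sim w$ and $v\not\sim w$. For any $u\in N(v)\setminus\{x\}$ with $u\not\sim x$, the set $\{u,v,x,w\}$ induces a $P_4$ when $u\not\sim w$ and a $C_4$ when $u\sim w$; hence $N(v)\setminus\{x\}\subseteq N(x)$, and since $v,w\in N(x)$ but $v,w\notin N(v)$ while $x\in N(v)$, we get $d(x)\geq d(v)+1$, contradicting the choice of $v$. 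Granting the lemma, I would prove $(ii)\Rightarrow(i)$ by induction on $|V(G)|$ (the case $|V(G)|\leq 1$ being clear): if $G$ is disconnected, apply the induction hypothesis to each component and take the disjoint union of the resulting arborescence orders, adding no relations across components; if $G$ is connected with universal vertex $v$, apply the induction hypothesis to $G-v$ to obtain an arborescence order $(V(G)\setminus\{v\},<')$ and extend it by declaring $v<x$ for every $x\neq v$. A routine check shows the extended relation is still an arborescence order — the down-set of any $x\neq v$ is $\{v\}\cup\{y:y<'x\}$, a chain with bottom $v$ — and that its comparability graph is exactly $G$.

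Finally, for $(i)\Rightarrow(iii)$, by heredity it suffices to show $\alpha(G)=m(G)$ when $G$ is the comparability graph of an arborescence order $(V,<)$. The cliques of $G$ are precisely the chains of $(V,<)$, and I claim the maximal chains are exactly the sets $\{y:y\leq t\}$ for $t$ a maximal element of $(V,<)$: indeed $\{y:y\leq t\}$ is a chain because $\{y:y<t\}$ is, and it fails to be maximal exactly when some element lies above $t$; conversely a maximal chain has a largest element $t$, is contained in the chain $\{y:y\leq t\}$ and hence equals it, and then $t$ must be maximal. Therefore $m(G)$ equals the number of maximal elements of $(V,<)$, and these form an antichain, i.e. an independent set of $G$, so $\alpha(G)\geq m(G)$; combined with the elementary bound $\alpha(G)\leq m(G)$ (distinct vertices of an independent set lie in distinct maximal cliques), this gives $\alpha(G)=m(G)$ and closes the cycle.
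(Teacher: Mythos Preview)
Your proof is correct. Note, however, that the paper does not supply its own proof of this theorem: it is stated as a cited result of Wolk and Golumbic and used only as background for Theorem~4.6. There is therefore no in-paper argument to compare against.

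That said, your argument is the standard one and each step checks out. The universal-vertex lemma for connected $\{P_4,C_4\}$-free graphs is exactly the right engine for $(ii)\Rightarrow(i)$, and your degree-counting proof of it is clean; the only place a reader might pause is the distance-$2$ claim, but your parenthetical handles it (the first four vertices of a geodesic of length~$\geq 3$ induce a $P_4$). Your identification of maximal cliques with down-sets of maximal elements in $(i)\Rightarrow(iii)$ is also correct, since in an arborescence order every principal down-set is automatically a chain. The auxiliary direct argument for $(i)\Rightarrow(ii)$ is not needed for the cycle but is a nice sanity check.
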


Next we provide another characterization of $\{P_{4}, C_4\}$-free
graphs.

\begin{theorem} Let $G$ be a graph. Then $G$ is $\{P_{4}, C_4\}$-free if and only if
$\Gamma(H)=col(H)$ for any induced subgraph $H$ of $G$.
\end{theorem}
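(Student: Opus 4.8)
The plan is to prove both implications by exploiting the known characterization in Theorem 4.5: $\{P_4, C_4\}$-free graphs are exactly the trivially perfect graphs. Since the property ``$\Gamma(H) = col(H)$ for every induced subgraph $H$'' is hereditary by definition, and since trivial perfection is also a hereditary property (Theorem 4.5 gives a forbidden-subgraph characterization), it suffices to argue at the level of the graph $G$ itself together with the hereditary closure, rather than revisiting each induced subgraph separately.

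For the forward direction, assume $G$ is $\{P_4, C_4\}$-free; then every induced subgraph $H$ is also $\{P_4, C_4\}$-free, hence $\{P_4\}$-free, hence perfect, and moreover chordal (a $\{P_4, C_4\}$-free graph has no induced $C_k$ for $k \ge 4$: for $k = 4$ this is the hypothesis, and for $k \ge 5$ any induced $C_k$ contains an induced $P_4$). By Theorem 4.3, chordality gives $col(H) = \omega(H)$ for every induced subgraph $H$. On the other hand, since $H$ is $P_4$-free, Theorem 4.4 (the Christen–Selkow equivalence) gives that $H$ is $\Gamma\omega$-perfect, so $\Gamma(H) = \omega(H)$ as well. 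Combining, $\Gamma(H) = \omega(H) = col(H)$ for every induced subgraph $H$ of $G$, which is exactly what is required.

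For the converse, suppose $G$ is not $\{P_4, C_4\}$-free, so $G$ contains an induced $P_4$ or an induced $C_4$. If $G$ has an induced $P_4$, call it $H$: then $col(P_4) = 2$ while $\Gamma(P_4) = 3$ (as noted in the introduction), so $\Gamma(H) \ne col(H)$. If instead $G$ has an induced $C_4$, call it $H$: then $col(C_4) = 3$ while $\Gamma(C_4) = 2$ (again from the introduction), so $\Gamma(H) \ne col(H)$. In either case the equality $\Gamma(H) = col(H)$ fails for some induced subgraph, proving the contrapositive.

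The main obstacle is the forward direction, and specifically the need to juggle two separate structural facts about $\{P_4, C_4\}$-free graphs at once: that they are chordal (to invoke Theorem 4.3 for $col = \omega$) and that they are $P_4$-free (to invoke Theorem 4.4 for $\Gamma = \omega$). Neither $col = \omega$ nor $\Gamma = \omega$ holds for $P_4$-free graphs in general versus chordal graphs in general — one must use the full strength of excluding \emph{both} $P_4$ and $C_4$. The small verification that a $\{P_4, C_4\}$-free graph contains no induced $C_k$ for any $k \ge 4$ is the one genuinely new (if routine) step; everything else is assembling Theorems 4.3, 4.4, and 4.5 together with the two small examples $P_4$ and $C_4$.
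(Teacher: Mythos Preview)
Your proof is correct and follows essentially the same route as the paper's: both directions use the explicit values $\Gamma(P_4)=3$, $col(P_4)=2$, $\Gamma(C_4)=2$, $col(C_4)=3$ for the converse, and for the forward direction both combine Theorem 4.4 (to get $\Gamma(H)=\omega(H)$ from $P_4$-freeness) with Theorem 4.3 (to get $col(H)=\omega(H)$ from chordality). Your explicit verification that a $\{P_4,C_4\}$-free graph is chordal---noting that an induced $C_k$ with $k\ge 5$ contains an induced $P_4$---is a detail the paper leaves implicit, so if anything your write-up is slightly more complete; the reference to Theorem 4.5 in your opening paragraph is unnecessary, as you never actually use trivial perfection.
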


\begin{proof}
To show its sufficiency, we assume that $\Gamma(H)=col(H)$ for any
induced subgraph $H$ of $G$. Since $col (C_{4})=3$ while
$\Gamma(C_{4})=2$, and $col(P_{4})=2$ while $\Gamma(P_{4})=3$, it
follows that $G$ is $C_{4}$-free and $P_{4}$-free.

To show its necessity, let $G$ be a $\{P_{4}, C_{4}\}$-free graph.
Let $H$ be an induced subgraph of $G$. Since $G$ is $P_4$-free, by
Theorem 4.4, $\Gamma(H)=\omega(H)$. On the other hand, by Theorem
4.3, $col(H)=\omega(H)$. The result then follows.
\end{proof}

Gastineau et al. \cite{Gas} posed the following conjecture.

\vspace{3mm}\noindent{\bf Conjecture 1.} (Gastineau, Kheddouci and
Togni \cite{Gas}). For any integer $r\geq 1$, every $C_4$-free
$r$-regular graph has Grundy number $r+1$.

\vspace{3mm} So, Theorem 4.6 asserts that the above conjecture is
true for every regular $\{P_{4}, C_{4}\}$-free graph. However, it is
not hard to show that a regular graph is $\{P_{4}, C_{4}\}$-free if
and only if it is a complete graph. Indeed, in 2011 Zaker
\cite{Zak11} made the following beautiful conjecture, which implies
Conjecture 1.

\vspace{3mm}\noindent{\bf Conjecture 2.} (Zaker \cite{Zak11}) If $G$
is $C_4$-free graph, then $\Gamma(G)\geq \delta(G)+1$.

\vspace{3mm} Note that Theorem 4.6 shows that Conjecture 2 is valid
for all $\{P_{4}, C_{4}\}$-free graphs.

\vspace{3mm}\noindent{\bf Acknowledgment.} The authors are grateful
to the referees for their careful readings and valuable comments.


\begin{thebibliography}{111}

\bibitem{Aou} M. Aouchiche and P. Hansen, A survey of Nordhaus-Gaddum type
relations, Discrete Appl. Math. 161 (2013) 466-546.

\bibitem{Ast} M. Ast\'{e}, F. Havet, and C. Linhares-Sales, Grundy number and products of graphs, Discrete Math. 310 (2010) 1482-1490.


\bibitem{Ber} C. Berge, F\"{a}rbung von Graphen, deren S\"{a}mtliche bzw.
deren ungerade Kreise starr sind, Wiss. Zeitung, Martin Luther Univ.
Halle-Wittenberg 1961, 114.

\bibitem{Bol} B. Bollob\'{a}s, P. Erd\H{o}s, Graphs of extremal weights, Ars Combin. 50 (1998) 225-233.

\bibitem{Chang} G. Chang, H. Hsu, First-fit chromatic numbers of
$d$-degenerate graphs, Discrete Math. 312 (2012) 2088-2090.

\bibitem{Cha} G. Chartrand, P. Zhang, Chromatic  Graph Theory, Chapman and
Hall/CRC, 2008

\bibitem{Chr} C.A. Christen and S.M. Selkow, Some perfect coloring properties of graphs, J. Combin. Theory Ser. B 27 (1979) 49-59.

\bibitem{Coc} E.J. Cockayne and A.G. Thomason, Ordered colorings of
graphs, J. Combin. Theory Ser. B 27 (1982) 286-292.

\bibitem{Dir} G.A. Dirac, On rigid circuit graphs, Abh. Math. Sem.
Univ. Hamurg, 25 (1961) 71-76.

\bibitem{DP} T.R. Divni\'{c} and L.R. Pavlovi\'{c}, Proof of the first part of the conjecture of Aouchiche
and Hansen about the Randi\'{c} index, Discrete Appl. Math. 161
(2013) 953-960.

\bibitem{Eff} B. Effantin and H. Kheddouci, Grundy number  of graphs, Discuss. Math. Graph Theory 27 (2007) 5-18.

\bibitem{Finck} H.J. Finck, On the chromatic number of a graph and its complements, Theory
of Graphs, Proceedings of the Colloquium, Tihany, Hungary, 1966,
99-113.


\bibitem{Fur} Z. F\"{u}redi, A. Gy\'{a}rf\'{a}s, G.N. S\'{a}rk\"{o}zy, S. Selkow,  Inequalities for the First-fit chromatic number, J. Graph Theory 59 (2008) 75-88.

\bibitem{Gas} N. Gastineau, H. Kheddouci and O. Togni, On the family of r-regular graphs with Grundy number r+1, Discrete Math. 328 (2014) 5-15.

\bibitem{Gol} M.C. Golumbic, Trivially perfect graphs, Discrete
Math. 24 (1978) 105-107.

\bibitem{Gru} P.M. Grundy, Mathematics and games, Eureka 2 (1939) 6-8.

\bibitem{Jen} T.R. Jensen and B. Toft, Graph Coloring Problems, A Wiely-Interscience Publication, Jhon
Wiely and Sons, Inc., New York, 1995.

\bibitem{Kie} H.A. Kierstead, S.G. Penrice and W. T. Trotter, On-Line and first-fit coloring of graphs
that do not induce $P_{5}$, SIAM J. Disc. Math. 8 (1995) 485-498.

\bibitem{LG} X. Li and I. Gutman,  Mathematical Aspects of
Randi\'{c}-Type Molecular Structure Descriptors, Mathematical
Chemistry Monographs No. 1, Kragujevac, 2006.

\bibitem{Har} F. Harary, S. Hedetniemi, The achromatic number of a
graph, J. Combin. Theory 8 (1970) 154-161.

\bibitem{LS} X. Li and Y. Shi, On a relation between the Randi\'{c} index and the chromatic
number, Discrete Math. 310 (2010) 2448-2451.

\bibitem{LLBL}J. Liu, M. Liang, B. Cheng and B. Liu, A proof for a conjecture on
the Randi\'{c} index of graphs with diameter, Appl. Math. Lett. 24
(2011) 752-756.

\bibitem{LLPDLS}  B. Liu, L.R. Pavlovi\'{c}, T.R. Divni\'{c},
J. Liu and M.M. Stojanovi\'{c}, On the conjecture of Aouchiche and
Hansen about the Randi\'{c} index, Discrete Math. 313 (2013)
225-235.

\bibitem{Mar} S.E. Markossian, G.S. Gasparian and B.A. Reed, $\beta$-perfect graphs, J.
Combin. Theory Ser. B 67 (1996) 1-11.

\bibitem{Nord} E.A. Nordhaus and J.W. Gaddum, On complementary graphs, Amer. Math. Monthly 63 (1956) 175-177.

\bibitem{R} M. Randi\'{c}, On characterization
of molecular branching, J. Amer. Chem. Soc. 97 (1975) 6609-6615.

\bibitem{Wo} E.S. Wolks, The comparability graph of a tree, Proc.
Amer. Math. Soc. 13 (1962) 789-795.

\bibitem{Wu} B. Wu, J. Yan and X. Yang, Randi\'{c} index and coloring
number of a graph, Discrete Appl. Math. 178 (2014) 163-165.

\bibitem{Zak05} M. Zaker, Grundy chromatic number of the complement of bipartite graphs, Australas. J. Comb. 31 (2005) 325-329.

\bibitem{Zak06} M. Zaker, Results on the Grundy chromatic number of graphs, Discrete Math. 306 (2006) 3166-3173.

\bibitem{Zak07} M. Zaker, Inequalities for the Grundy chromatic number of graphs, Discrete Appl. Math. 155 (2007) 2567-2572.

\bibitem{Zak08} M. Zaker, New bounds for the chromatic number of
graphs, J. Graph Theory 58 (2008) 110-122.

\bibitem{Zak11} M. Zaker, $(\delta, \chi_{_{\sf FF}})$-bounded families of
graphs, unpublished manuscript, 2011.

\bibitem{Zak15} M. Zaker and H. Soltani, First-fit colorings of graphs with no cycles of a prescribed even length, J. Comb.
Optim., In press, 2015.

\end{thebibliography}
\end{document}